\newtheorem{theorem}{Theorem}[section]
\newtheorem{lemma}{Lemma}[section]
\newtheorem{definition}{Definition}[section]
\newtheorem{example}{Example}[section]
\numberwithin{equation}{section}
\begin{document}
\title[Finite Blaschke Products]{Finite Blaschke Products and the Golden
Ratio}
\author{N\.{I}HAL YILMAZ \"{O}ZG\"{U}R}
\address{Bal\i kesir University\\
Department of Mathematics\\
10145 Bal\i kesir, TURKEY}
\email{nihal@balikesir.edu.tr}
\author{S\"{U}MEYRA U\c{C}AR}
\address{Bal\i kesir University\\
Department of Mathematics\\
10145 Bal\i kesir, TURKEY}
\email{sumeyraucar@balikesir.edu.tr}
\date{}
\subjclass[2010]{Primary 30J10; 11B39}
\keywords{Finite Blaschke products, golden ratio}

\begin{abstract}
It is known that the golden ratio $\alpha =\frac{1+\sqrt{5}}{2}$ has many
applications in geometry. In this paper we consider some geometric
properties of finite Blaschke products related to the golden ratio.
\end{abstract}

\maketitle

\section{\textbf{Introduction}}

\label{sec:1}

The golden ratio $\alpha =\frac{1+\sqrt{5}}{2}$ is the positive root of the
quadratic equation $x^{2}-x-1=0$. So we have
\begin{equation}
\alpha ^{2}=\alpha +1.  \label{eqn1}
\end{equation}

The golden ratio appears in modern research in many fields. For example, in
\cite{tutte} the golden ratio is used in graphs, in \cite{hopkins} it is
proved that in any dimension all solutions between unity and the golden
ratio to the optimal spherical code problem for $N$ spheres are also
solutions to the corresponding DLP (the densed local packing problem)
problem.

In this paper we give a connection between geometric properties of Blaschke
products and the golden ratio.

The rational function
\begin{equation*}
B(z)=\beta \underset{i=1}{\overset{n}{\prod }}\frac{z-a_{i}}{1-\overline{%
a_{i}}z}
\end{equation*}%
is called a finite Blaschke product of degree $n$ for the unit disc where $%
\left\vert \beta \right\vert =1$ and $\left\vert a_{i}\right\vert <1$, $%
1\leq i\leq n$. We call the finite Blaschke products of the following form
as canonical:%
\begin{equation}
B(z)=z\underset{j=1}{\overset{n-1}{\prod }}\frac{z-a_{j}}{1-\overline{a_{j}}z%
},\left\vert a_{j}\right\vert <1\text{ for }1\leq j\leq n-1.  \label{eqn11}
\end{equation}%
Note that the canonical Blaschke products correspond to finite Blaschke
products vanishing at the origin.

It is well-known that every Blaschke product $B$ of degree $n$ with $B(0)=0,$
is associated with a unique Poncelet curve (for more details see \cite%
{Daepp2002}, \cite{Daepp} and \cite{gau}). From \cite{Daepp2002} we know
that the Poncelet curve associated with a Blaschke product of degree $3$ is
an ellipse.

Here we investigate the relationships between the zeros of these canonical
finite Blaschke products and the golden ratio for $n=2,3,4$. Also we give
some examples for the cases $n=5,10$.

\section{\textbf{Blaschke Products of Degree Two}}

\label{sec:2}

Let $AB\ $be a line segment and $C$ be a point on the line segment $AB\ $%
such that $AC\ $is the greater part of $AB.$ Recall that we say the point $C$
divides the line segment $AB$ in the golden ratio if $\frac{AC}{BC}=\alpha $
\cite{Koshy}.

In this section we consider a finite Blaschke product $B$ of degree two of
the form
\begin{equation}
B_{a}(z)=z\frac{z-a}{1-\overline{a}z},  \label{eqn22}
\end{equation}%
with $a\neq 0$, $\left\vert a\right\vert <1$. From \cite{Daepp2002}, we know
that there exist two distinct points $z_{1}$ and $z_{2}$ on $\partial
\mathbb{D}$ that $B_{a}(z)$ maps to $\lambda $, for any point $\lambda $ on
the unit circle $\partial \mathbb{D}$, and that the line joining $z_{1}$ and
$z_{2}$ passes through $a$, the nonzero zero of $B_{a}$. Conversely, let $L$
be any line through the point $a$, then for the points $z_{1}$ and $z_{2}$
at which $L$ intersects $\partial \mathbb{D}$ we have $%
B_{a}(z_{1})=B_{a}(z_{2})$.

Now we ask the following questions:

$1$) Does the point $a$ divide the line segment $\left[ z_{1},z_{2}\right] $
joining $z_{1}$ and $z_{2}$ in the golden ratio?

$2$) If it does, what is the number of these line segments?

The answers of these questions are given in the following theorem.

\begin{figure}[h]
\centering
\includegraphics[height=8cm, width=8cm]{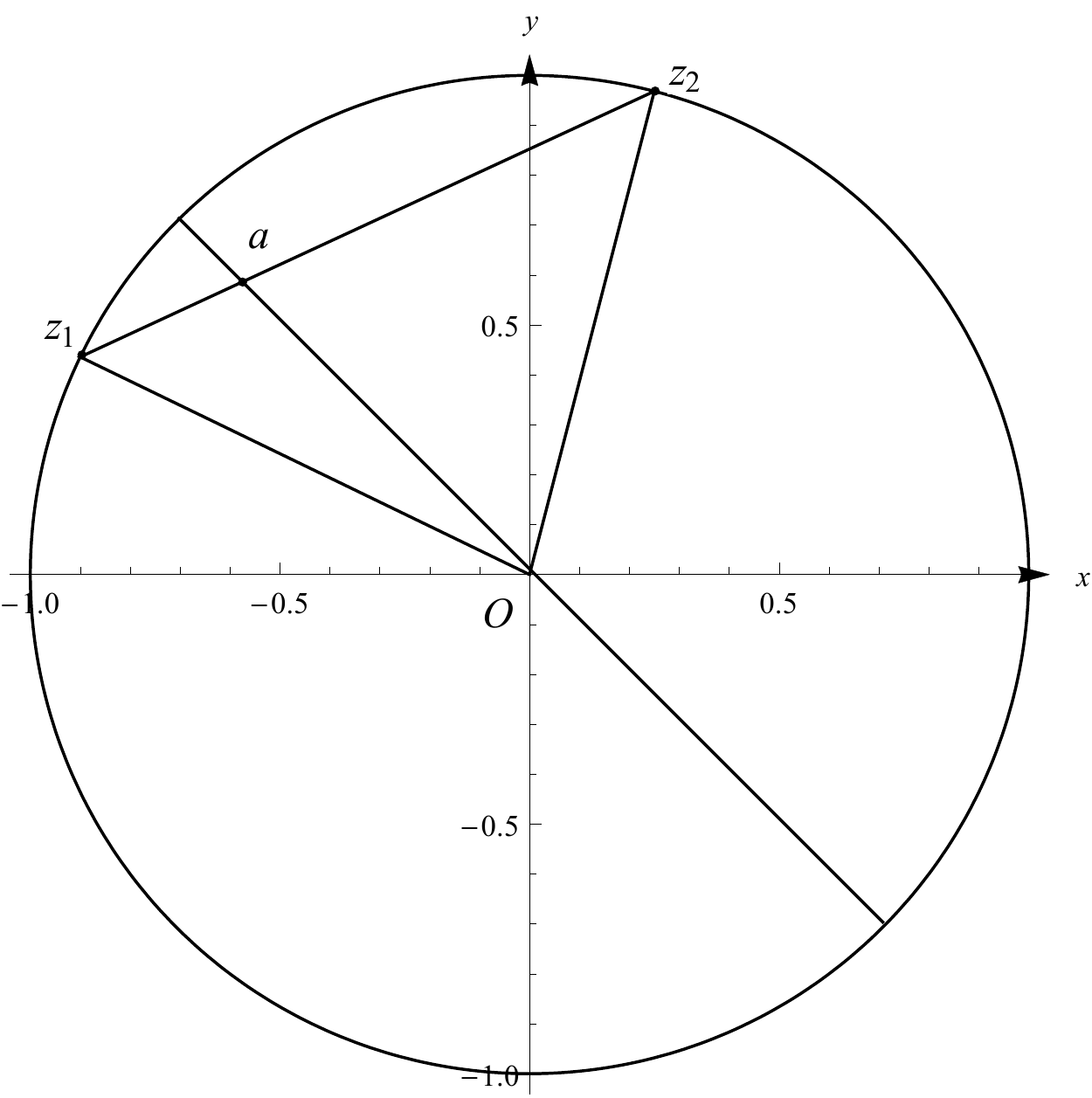}
\caption{{\protect\small {}}}
\label{fig:4}
\end{figure}

\begin{theorem}
\label{thm1} Let $B_{a}(z)=z\frac{z-a}{1-\overline{a}z}$ be a Blaschke
product with $a\neq 0$, $\left\vert a\right\vert <1$. There are infinitely
many values of $a$ such that there is a line segment with endpoints on the
unit circle divided by $a$ in the golden ratio. Furthermore the number of
such line segments is at most two for a fixed $a$.
\end{theorem}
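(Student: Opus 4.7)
The plan is to reduce the statement to a one-parameter chord calculation. Because the geometric question ``does a line through $a$ hit $\partial\mathbb{D}$ in two points whose separating ratio at $a$ equals $\alpha$'' is invariant under the simultaneous rotation $a\mapsto e^{-i\varphi}a$, $z_j\mapsto e^{-i\varphi}z_j$, I may assume that $a=|a|>0$ is real, and parameterize each undirected line through $a$ by an angle $\theta\in[0,\pi)$.

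Writing such a line as $z(t)=a+te^{i\theta}$ and intersecting with $\partial\mathbb{D}$ gives $t^{2}+2a\cos\theta\cdot t+(a^{2}-1)=0$ with discriminant $4(1-a^{2}\sin^{2}\theta)>0$. The two roots $t_{\pm}$ satisfy $t_{+}>0>t_{-}$ when $|a|<1$, and the two chord-distances from $a$ to the endpoints $z_{1},z_{2}\in\partial\mathbb{D}$ are
\[
d_{+}=\sqrt{1-a^{2}\sin^{2}\theta}+a\cos\theta,\qquad d_{-}=\sqrt{1-a^{2}\sin^{2}\theta}-a\cos\theta,
\]
both positive. Imposing $\max(d_{+},d_{-})/\min(d_{+},d_{-})=\alpha$ and exploiting the key identity $(\alpha+1)/(\alpha-1)=\alpha^{3}$, which follows at once from $\alpha^{2}=\alpha+1$ and $\alpha(\alpha-1)=1$, the condition collapses to $\sqrt{1-a^{2}\sin^{2}\theta}=\alpha^{3}a|\cos\theta|$; squaring and rearranging yields
\[
\cos^{2}\theta=\frac{1-a^{2}}{a^{2}(\alpha^{6}-1)}.
\]

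This equation admits a valid solution with $\cos^{2}\theta\in[0,1]$ precisely when $1/\alpha^{3}\leq|a|<1$, and the annulus $\{a:1/\alpha^{3}\leq|a|<1\}$ contains infinitely many (in fact uncountably many) admissible $a$, which proves the first assertion. For each admissible $a$ the value of $\cos^{2}\theta$ is uniquely determined, so $\cos\theta$ takes at most two values, one positive and one negative, each corresponding to a single undirected line through $a$ in $[0,\pi)$; the two lines coincide only in the extremal case $|a|=1/\alpha^{3}$, where $\cos^{2}\theta=1$. Hence at most two chords through a fixed admissible $a$ have the required golden-ratio property. No serious obstacle is expected: the only delicate step is arranging the algebra so that the identity $(\alpha+1)/(\alpha-1)=\alpha^{3}$ makes the threshold $|a|\geq 1/\alpha^{3}$ emerge transparently, while the chord arithmetic and the counting of directions $\theta$ modulo $\pi$ are entirely routine.
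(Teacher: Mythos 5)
Your argument is correct and lands on exactly the same threshold as the paper's --- the paper's condition $\frac{1+\left\vert a\right\vert }{1-\left\vert a\right\vert }\geq \alpha$ is precisely your $\left\vert a\right\vert \geq \alpha ^{-3}$, since $\frac{\alpha -1}{\alpha +1}=\frac{1/\alpha }{\alpha ^{2}}=\alpha ^{-3}$ --- but the route is genuinely different. For existence, the paper argues by continuity: the longer-to-shorter ratio is a continuous function of the angle $\theta $ between $[0,a]$ and the chord, running from $\frac{1+\left\vert a\right\vert }{1-\left\vert a\right\vert }$ at $\theta =0$ down to $1$ at $\theta =\pi /2$, and the secant (power-of-a-point) identity $(1-\left\vert a\right\vert )(1+\left\vert a\right\vert )=l\cdot l\alpha $ together with the requirement that the chord length $l(1+\alpha )$ not exceed the diameter pins down when the value $\alpha $ is attained. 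For the count, the paper switches to the Blaschke product itself, combining $B(z_{1})=B(z_{2})$ on $\partial \mathbb{D}$ with $\left\vert z_{2}-a\right\vert =\alpha \left\vert z_{1}-a\right\vert $ to obtain a quadratic equation in $z_{2}$, hence at most two segments. You instead parameterize the lines through $a$ directly, solve the intersection quadratic $t^{2}+2a\cos \theta \cdot t+(a^{2}-1)=0$, and extract the closed form $\cos ^{2}\theta =\frac{1-a^{2}}{a^{2}(\alpha ^{6}-1)}$, which delivers existence (solvable iff $\left\vert a\right\vert \geq \alpha ^{-3}$) and the bound of two (at most two directions $\theta $ in $[0,\pi )$, collapsing to one in the extremal diameter case) in a single stroke. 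Note that your product $d_{+}d_{-}=1-a^{2}$ is the same power-of-a-point fact the paper invokes, so the underlying geometry is shared; what your version buys is an explicit formula and a uniform treatment of existence and multiplicity, with no appeal to the intermediate value theorem and no use of the functional equation $B(z_{1})=B(z_{2})$ --- which is consistent, since that relation only serves to identify the fibers of $B_{a}$ on $\partial \mathbb{D}$ with chords through $a$. The squaring step is harmless because both sides of $\sqrt{1-a^{2}\sin ^{2}\theta }=\alpha ^{3}a\left\vert \cos \theta \right\vert $ are nonnegative, so no spurious solutions are introduced.
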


\begin{proof}
Let $a$ be a fixed point such that $a\neq 0$, $\left\vert a\right\vert <1$
and consider the finite Blaschke product $B_{a}(z)=z\frac{z-a}{1-\overline{a}%
z}$. The ratio of the length of the longer part to length of the smaller
part of the segment $\left[ z_{1},z_{2}\right] $ divided by the point $a$
gives rise to a continuous function of the angle $\theta $ between the
segments $\left[ 0,a\right] $ and $\left[ z_{1},z_{2}\right] $. For $\theta
=0$ the ratio is $\frac{1+\left\vert a\right\vert }{1-\left\vert
a\right\vert }$ and for $\theta =\frac{\pi }{2}$ the ratio is $1$. Applying
the well known secant property of a circle to Figure \ref{fig:4}, it should
be
\begin{equation*}
(1-\left\vert a\right\vert )(1+\left\vert a\right\vert )=l\alpha .l
\end{equation*}%
where $l$ is the length of the segment $\left[ z_{1},a\right] $ and $l\alpha
$ is the length of the segment $\left[ a,z_{2}\right] $ \cite{coxeter}. Then
we get
\begin{equation}
l=\sqrt{\frac{1-\left\vert a\right\vert ^{2}}{\alpha }}  \label{eqn4}
\end{equation}%
Since nonlinear three distinct points determine a triangle, if the points $%
0,z_{1},z_{2}$ form a triangle it should be
\begin{equation}
0<l+l\alpha <2.  \label{eqn5}
\end{equation}%
If we substitute the equation $($\ref{eqn4}$)$ in $($\ref{eqn5}$),$ we get
\begin{equation*}
\sqrt{\frac{1-\left\vert a\right\vert ^{2}}{\alpha }}(\alpha +1)<2.
\end{equation*}%
Then we get
\begin{equation*}
\frac{1+\left\vert a\right\vert }{1-\left\vert a\right\vert }>\alpha .
\end{equation*}%
If $\alpha =\frac{1+\left\vert a\right\vert }{1-\left\vert a\right\vert },$
then the line passing through the points $z_{1},z_{2}$ and $a$ is the
diameter of the unit circle.

For this reason, as long as $\frac{1+\left\vert a\right\vert }{1-\left\vert
a\right\vert }\geq \alpha ,$ there is a segment divided by $a$ in the golden
ratio. Now we find the number of the segments divided by $a$ in the golden
ratio for a such $a$.

Let $a$ be chosen such that $\frac{1+\left\vert a\right\vert }{1-\left\vert
a\right\vert }\geq \alpha $ and $z_{1}$ be chosen such that the point $a$
divides the line segment $\left[ z_{1},z_{2}\right] $ in the golden ratio.
Then by definition we have%
\begin{equation}
\frac{\left\vert z_{2}-a\right\vert }{\left\vert z_{1}-a\right\vert }=\alpha
.  \label{eqn333}
\end{equation}%
Using the fact that $\left\vert z\right\vert =1$ for $z\in \partial \mathbb{D%
}$ we can write
\begin{equation*}
B(z)=\frac{z-a}{\overline{z}-\overline{a}}\text{, }z\in \partial \mathbb{D}.
\end{equation*}%
Also we know that $B(z_{1})=B(z_{2})$ and so we obtain
\begin{equation}
\frac{(z_{1}-a)}{(\overline{z}_{1}-\overline{a})}=\frac{z_{2}-a}{\overline{z}%
_{2}-\overline{a}}  \label{eqn31}
\end{equation}%
From the equation (\ref{eqn333}) we have
\begin{equation}
\frac{(z_{2}-a)(\overline{z}_{2}-\overline{a})}{(z_{1}-a)(\overline{z}_{1}-%
\overline{a})}=\alpha ^{2}  \label{eqn33}
\end{equation}%
and from the equation (\ref{eqn31}) we find
\begin{equation}
\overline{z}_{2}-\overline{a}=\frac{(z_{2}-a)(\overline{z}_{1}-\overline{a})%
}{(z_{1}-a)}  \label{eqn32}
\end{equation}%
After substitute (\ref{eqn32}) into (\ref{eqn33}) we get the equation%
\begin{equation}
-z_{2}^{2}+2az_{2}+\alpha a^{2}-2a\alpha ^{2}z_{1}+\alpha ^{2}z_{1}^{2}=0%
\text{.}  \label{eqn34}
\end{equation}%
Clearly the last equation (\ref{eqn34}) has at most two roots with respect
to $z_{2}$. Hence there are at most two line segments $\left[ z_{1},z_{2}%
\right] $ passing through the point $a$ and divided by $a$ in the golden
ratio. This fact can be also seen by some geometric arguments.
\end{proof}

\begin{figure}[t]
\centering
\includegraphics[height=8cm, width=8cm]{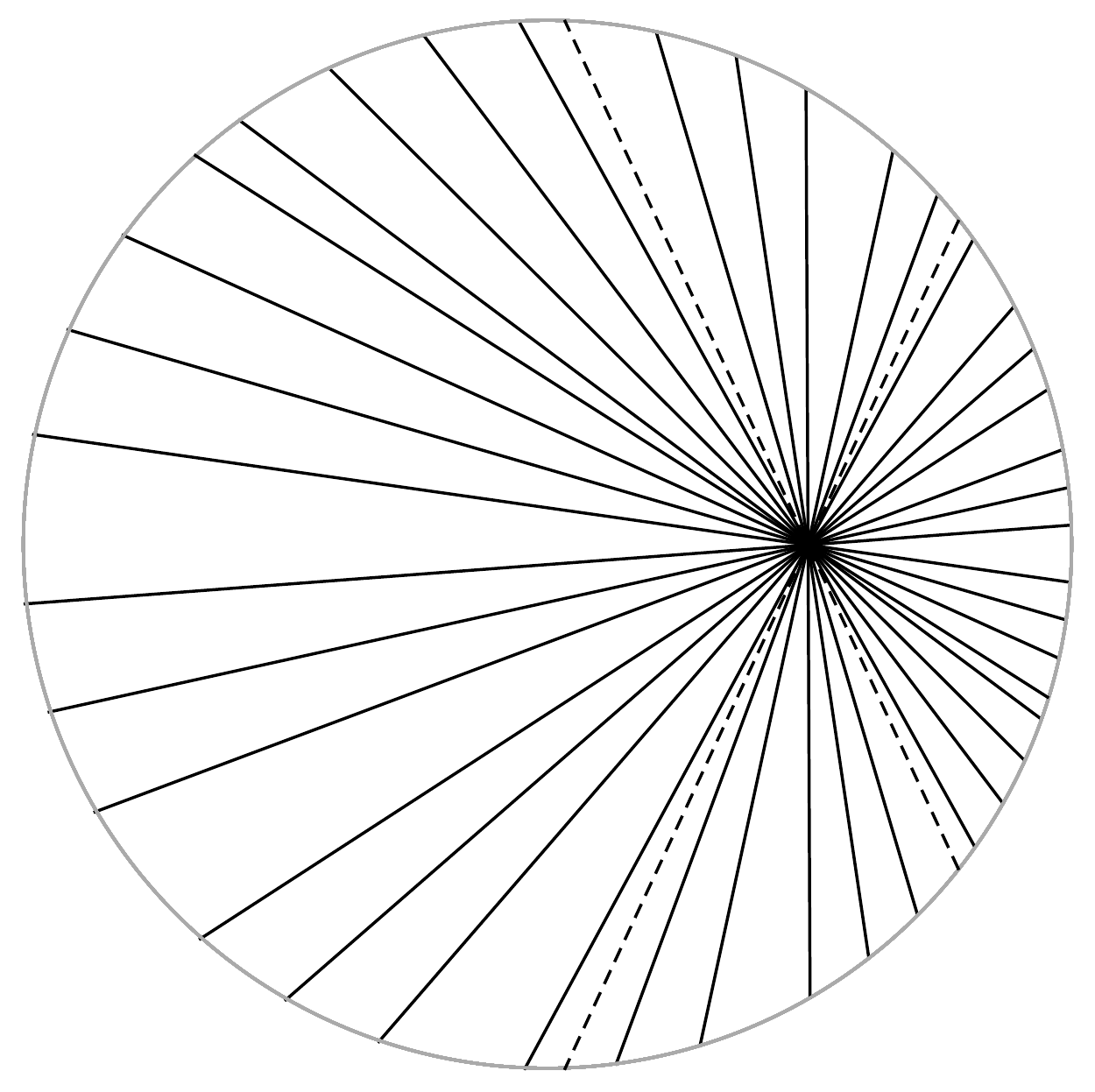}
\caption{{\protect\small {Blaschke product }}of degree $2$ with $a=\frac{1}{2%
}${\protect\small {.}}}
\label{fig:1}
\end{figure}

\begin{example}
Let us consider the Blaschke product $B(z)=z\frac{z-\frac{1}{2}}{1-\frac{1}{2%
}z}$. Let $z_{1}$ and $z_{2}$ be two distinct points satisfying $%
B(z_{1})=B(z_{2})$. If the point $a=\frac{1}{2}$ divides the line segment $%
\left[ z_{1},z_{2}\right] $ in the golden ratio, from the common solutions
of the equations $($\ref{eqn33}$)$ and $($\ref{eqn31}$)$ we obtain the
Figure \ref{fig:1}. There the dashed line segments show the line segments
which are divided by the point $a=\frac{1}{2}$ in the golden ratio.
\end{example}

\section{\textbf{Blaschke Products of Degree Three}}

\label{sec:3}

In this section, we consider a finite Blaschke product $B$ of degree three
of the form
\begin{equation*}
B(z)=z\frac{(z-a_{1})(z-a_{2})}{(1-\overline{a}_{1}z)(1-\overline{a}_{2}z)},
\end{equation*}%
with distinct zeros at the points $0$, $a_{1}$ and $a_{2}$. It is well-known
that for any specified point $\lambda $ of the unit circle $\partial \mathbb{%
D}$, there exist $3$ distinct points $z_{1}$, $z_{2}$ and $z_{3}$ of $%
\partial \mathbb{D}$ such that $B(z_{1})=B(z_{2})=B(z_{3})=\lambda $.

We know the following theorem for a Blaschke product of degree three.

\begin{theorem}
$($See \cite{Daepp2002} Theorem 1$)$ Let $B$ be a Blaschke product of degree
three with distinct zeros at the points $0$, $a_{1}$ and $a_{2}$. For $%
\lambda $ on the unit circle, let $z_{1}$, $z_{2}$ and $z_{3}$ denote the
points mapped to $\lambda $ under $B$. Then the lines joining $z_{j}$ and $%
z_{k}$ for $j\neq k$ are tangent to the ellipse $E$ with equation%
\begin{equation}
\left\vert z-a_{1}\right\vert +\left\vert z-a_{2}\right\vert =\left\vert 1-%
\overline{a_{1}}a_{2}\right\vert.  \label{eqn8}
\end{equation}%
Conversely, every point on $E$ is the point of tangency of a line segment
joining two distinct points $z_{1}$ and $z_{2}$ on the unit circle for which
$B(z_{1})=B(z_{2})$.
\end{theorem}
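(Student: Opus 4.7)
The plan is to prove that the chord through $z_1$ and $z_2$ is tangent to $E$ by using the focal reflection characterization of the ellipse, and to handle the other two chords $[z_1,z_3]$ and $[z_2,z_3]$ by the same argument with relabeling. Converse to come from a counting/parametrization argument.

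First I would extract the symmetric functions. Clearing denominators in the equation $B(z)=\lambda$ yields the polynomial identity
\begin{equation*}
z^{3}-(a_{1}+a_{2}+\lambda \overline{a}_{1}\overline{a}_{2})z^{2}
+(a_{1}a_{2}+\lambda(\overline{a}_{1}+\overline{a}_{2}))z-\lambda =0,
\end{equation*}
so Vieta gives $z_{1}z_{2}z_{3}=\lambda $ and
$z_{1}+z_{2}+z_{3}=a_{1}+a_{2}+\lambda \overline{a}_{1}\overline{a}_{2}$.

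Next I would use the following classical fact: a line $L$ is tangent to the ellipse with foci $a_{1},a_{2}$ and focal sum $R$ if and only if the reflection $a_{1}^{\prime }$ of $a_{1}$ across $L$ satisfies $|a_{1}^{\prime }-a_{2}|=R$. For points on the unit circle, the chord through $z_{1}$ and $z_{2}$ has the equation $w+z_{1}z_{2}\overline{w}=z_{1}+z_{2}$, and the reflection of any point $p$ across this line is $z_{1}+z_{2}-z_{1}z_{2}\overline{p}$. Applied to $a_{1}$, the tangency criterion becomes the identity
\begin{equation*}
\left\vert z_{1}+z_{2}-a_{2}-z_{1}z_{2}\overline{a}_{1}\right\vert
=\left\vert 1-\overline{a}_{1}a_{2}\right\vert .
\end{equation*}

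The main step is to verify this identity. Using $z_{1}z_{2}=\lambda /z_{3}$ and the Vieta relation for $z_{1}+z_{2}$, the left-hand expression rewrites as
\begin{equation*}
(a_{1}-z_{3})+\lambda \overline{a}_{1}\overline{(a_{2}-z_{3})}.
\end{equation*}
Now $B(z_{3})=\lambda $ expresses $\lambda $ as a rational function of $z_{3}$, and using $\overline{a_{2}-z_{3}}=-(1-\overline{a}_{2}z_{3})/z_{3}$ on the unit circle, the second term telescopes so that the whole bracket factors as $-(z_{3}-a_{1})(1-\overline{a}_{1}a_{2})/(1-\overline{a}_{1}z_{3})$. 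Since $|z_{3}-a_{1}|=|1-\overline{a}_{1}z_{3}|$ for $|z_{3}|=1$, the claimed modulus identity drops out. This is the step I expect to be the main obstacle, because it requires seeing the right algebraic grouping; everything else is book-keeping.

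Swapping the roles of the three preimages gives tangency of each of the three chords $[z_{j},z_{k}]$. For the converse, given $w\in E$, let $L$ be the tangent at $w$ and let $z_{1},z_{2}$ be its intersections with $\partial \mathbb{D}$; I would show $B(z_{1})=B(z_{2})$ by a continuity/uniqueness argument. Namely, as $\lambda $ varies on $\partial \mathbb{D}$, the three chords sweep a one-parameter family of tangent lines to $E$; since through each point of $E$ passes exactly one tangent line, this sweep is surjective onto the tangent lines, which forces $L$ to be among them and hence $z_{1},z_{2}$ to satisfy $B(z_{1})=B(z_{2})$.
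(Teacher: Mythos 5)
This theorem is quoted in the paper from \cite{Daepp2002} and the paper gives no proof of it, so there is no internal proof to compare against; your argument has to stand on its own. Your forward direction does: the Vieta relations are right, the map $w\mapsto z_{1}+z_{2}-z_{1}z_{2}\overline{w}$ really is the reflection across the chord (an anti-isometry fixing $z_{1}$ and $z_{2}$), and the algebra closes up exactly as you predict. Substituting $z_{1}+z_{2}=a_{1}+a_{2}+\lambda \overline{a}_{1}\overline{a}_{2}-z_{3}$ and $z_{1}z_{2}=\lambda \overline{z}_{3}$ gives $(a_{1}-z_{3})+\lambda \overline{a}_{1}\overline{(a_{2}-z_{3})}$, and inserting $\lambda =B(z_{3})$ together with $\overline{a_{2}-z_{3}}=-(1-\overline{a}_{2}z_{3})\overline{z}_{3}$ collapses this to $-(z_{3}-a_{1})(1-\overline{a}_{1}a_{2})/(1-\overline{a}_{1}z_{3})$, whose modulus is $\left\vert 1-\overline{a}_{1}a_{2}\right\vert$ since $\left\vert z_{3}-a_{1}\right\vert =\left\vert 1-\overline{a}_{1}z_{3}\right\vert$ on $\left\vert z_{3}\right\vert =1$. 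The reflection criterion is also immune to the usual caveat about the line separating the foci: if the chord separated $a_{1}$ from $a_{2}$, the reflected point would satisfy $\left\vert a_{1}^{\prime }-a_{2}\right\vert \leq \left\vert a_{1}-a_{2}\right\vert <\left\vert 1-\overline{a}_{1}a_{2}\right\vert$, contradicting your identity, so the identity alone forces tangency. This is a genuinely different (and cleaner) route than the original computational proof in \cite{Daepp2002}, which exhibits the point of tangency explicitly.

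The converse is where you have a real gap. From ``through each point of $E$ passes exactly one tangent line'' you infer that the family of chords obtained as $\lambda$ runs over $\partial \mathbb{D}$ is surjective onto the set of tangent lines of $E$; that is a non sequitur --- uniqueness of the tangent at each point says nothing about which tangent lines your one-parameter family actually produces. The standard repair: given $w\in E$ with tangent line $T$, note that $E\subset \mathbb{D}$ (it is inscribed in triangles inscribed in $\partial \mathbb{D}$), so $T$ meets $\partial \mathbb{D}$ at two points $u_{1},u_{2}$. Put $\lambda =B(u_{1})$ and let $u_{1},v_{2},v_{3}$ be the three preimages of $\lambda$. By the forward direction the two distinct lines through $u_{1}$ and $v_{2}$, resp.\ $u_{1}$ and $v_{3}$, are tangent to $E$ (distinct because three distinct points of $\partial \mathbb{D}$ are never collinear), and from the exterior point $u_{1}$ there are exactly two tangent lines to $E$; since $T$ is a tangent line through $u_{1}$, it must be one of these two, whence $u_{2}\in \{v_{2},v_{3}\}$ and $B(u_{2})=B(u_{1})$. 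With that substitution your proof is complete.
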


The ellipse $E$ in (\ref{eqn8}) is called a Blaschke $3$-ellipse associated
with the Blaschke product $B(z)$ of degree $3$. There are many studies on
the ellipse $E$ given in $($\ref{eqn8}$)$ (see \cite{Daepp}, \cite{Frantz},
\cite{fujimura}, \cite{Shubak}, \cite{NYO1} and \cite{NYO2} for more
details). For any $\lambda \in \partial \mathbb{D}$, we know that $E$
circumscribed in the triangle $\Delta (z_{1},z_{2},z_{3})$, where $%
z_{1},z_{2}$ and $z_{3}$ are the points mapped to $\lambda $ under $B$.

A golden triangle is an isosceles triangle such that the ratio of one its
lateral sides to the base is the golden ratio $\alpha =\frac{1+\sqrt{5}}{2}.$
A golden ellipse is an ellipse such that the ratio of the major axis to the
minor axis is the golden ratio $\frac{1+\sqrt{5}}{2}$ (see \cite{Koshy} for
more details).

We have the following questions:

$1)$ Are there any Blaschke $3$-ellipses which are circumscribed (at least)
one golden triangle?

$2)$ Can a Blaschke $3$-ellipse be a golden ellipse? If so, what is the
number of these ellipses?

We begin by the answering of the first question.

\begin{theorem}
\label{thm2} There are infinitely many golden triangles whose three vertices
lie on the unit circle.
\end{theorem}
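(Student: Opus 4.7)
The plan is to exhibit a one-parameter family of golden triangles inscribed in the unit circle $\partial\mathbb{D}$, parameterized by the angular position of a chosen vertex. Since a golden triangle is isosceles with a prescribed side-length ratio, its similarity class is rigid; any inscribed realization is therefore unique up to a rotation about the origin, and varying that rotation will produce the required infinite family.

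First I would fix an inscribed isosceles triangle with apex at $e^{i\theta}$ and apex angle $2\varphi$, the two base vertices being symmetric with respect to the diameter through $e^{i\theta}$. By the inscribed angle theorem the base chord subtends a central angle of $4\varphi$ and each lateral chord subtends a central angle of $\pi-2\varphi$; hence on the unit circle the base has length $2\sin 2\varphi$ and each lateral side has length $2\cos\varphi$, so
\begin{equation*}
\frac{\text{lateral}}{\text{base}} \;=\; \frac{2\cos\varphi}{2\sin 2\varphi} \;=\; \frac{1}{2\sin\varphi}.
\end{equation*}

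Next I would impose the golden-triangle condition $\frac{1}{2\sin\varphi}=\alpha$, which using \eqref{eqn1} gives $\sin\varphi = \frac{1}{2\alpha}=\frac{\alpha-1}{2}=\frac{\sqrt{5}-1}{4}=\sin(\pi/10)$, so the unique acute solution is $\varphi=\pi/10$. The resulting triangle has apex angle $\pi/5$ and base angles $2\pi/5$, so for each $\theta\in[0,2\pi)$ the three points
\begin{equation*}
e^{i\theta},\qquad e^{i(\theta+4\pi/5)},\qquad e^{i(\theta+6\pi/5)}
\end{equation*}
form a golden triangle inscribed in $\partial\mathbb{D}$.

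Finally I would note that distinct values of $\theta$ yield distinct unordered vertex sets, since a non-equilateral triangle admits no nontrivial rotational self-symmetry about its circumcentre; this already produces uncountably many golden triangles with vertices on the unit circle. The only non-routine step is the trigonometric identification $\sin(\pi/10)=(\sqrt{5}-1)/4$, which is classical and is moreover consistent with the identity $\alpha-1=1/\alpha$ coming from \eqref{eqn1}; everything else is a direct application of the inscribed angle theorem together with the rotational symmetry of $\partial\mathbb{D}$.
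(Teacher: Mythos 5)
Your proof is correct and follows the same overall strategy as the paper: exhibit one golden triangle inscribed in the unit circle and then rotate it to obtain infinitely many. The only real difference is in how the single triangle is produced --- the paper places the apex at $1$, writes the golden condition in Cartesian coordinates and solves a quadratic to get the numerical values $x=0.809017$, $y=0.587785$, whereas you use the inscribed angle theorem to pin down the apex angle exactly as $\pi/5$ (via $\sin(\pi/10)=\tfrac{1}{2\alpha}=\tfrac{\sqrt{5}-1}{4}$), which yields the cleaner explicit vertices $e^{i\theta}$, $e^{i(\theta+4\pi/5)}$, $e^{i(\theta+6\pi/5)}$.
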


\begin{proof}
Without loss of generality, let $x$ and $y$ be chosen so that $x,y>0$ and
such that the triangle with vertices at the points $1,-x+iy,-x-iy$ is
inscribed in the unit circle. We try to determine the values of $x$ and $y$
such that $x^{2}+y^{2}=1$. By the definition of a golden triangle it is
sufficient to show that there are values of $x$ and $y$ on the unit circle
such that%
\begin{equation}
2\alpha y=\sqrt{y^{2}+(x+1)^{2}}.  \label{eqn9}
\end{equation}%
Squaring both sides of (\ref{eqn9}) and using the fact that $x^{2}+y^{2}=1$,
we obtain $2y^{2}\alpha ^{2}=x+1.$ Then we have%
\begin{equation*}
2(1-x^{2})\alpha ^{2}-x-1=0
\end{equation*}%
and so%
\begin{equation*}
2x^{2}\alpha ^{2}+x+(1-2\alpha ^{2})=0.
\end{equation*}

Solving this quadric equation for $x$ and $y,$ we obtain $x=0,809017$ and $%
y= $ $0.587785$ where $y=\sqrt{1-x^{2}}.$ So we have one golden triangle
such that its vertices are on the unit circle. Then there are infinitely
many golden triangles with vertices on the unit circle by rotation.
\end{proof}

Now we can construct some examples using some results from \cite{Daepp} and
\cite{Shubak}. Recall that two sets $\{z_{1},z_{2},...,z_{n}\}$ and $%
\{w_{1},w_{2},...,w_{n}\}$ of points from $\partial \mathbb{D}$ are
interspersed if $0\leq \arg (z_{1})<\arg (w_{1})<...<\arg (z_{n})<\arg
(w_{n})<2\pi $ $($see \cite{Daepp} for more details$)$.

From \cite{Frantz}, we know that the ellipses inscribed in triangles with
vertices on the unit circle are precisely Blaschke $3$-ellipses.

\begin{figure}[h]
\centering
\includegraphics[height=8cm, width=8cm]{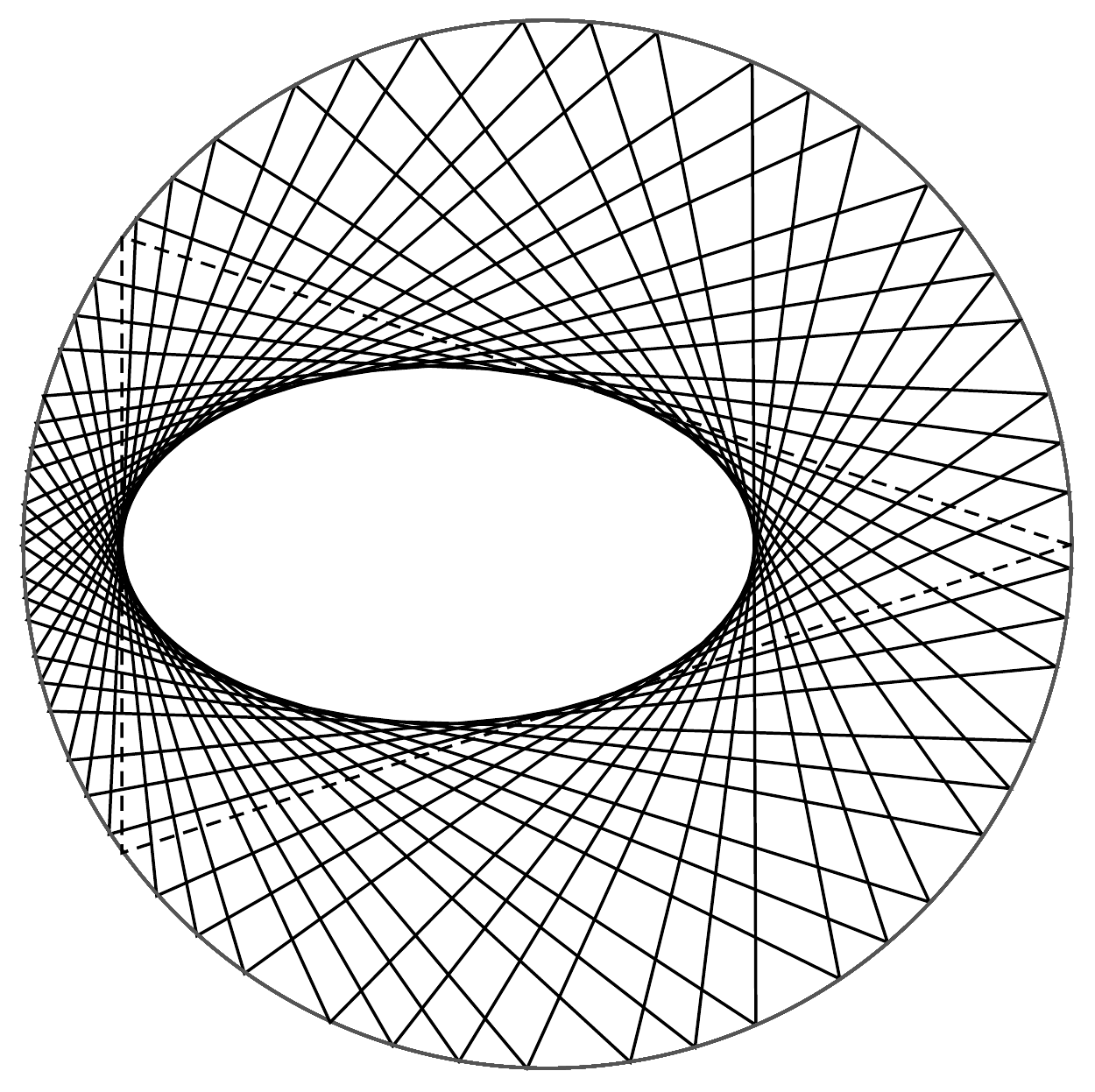}
\caption{{\protect\small Blaschke product $B$ of degree $3$ whose Poncelet
curve inscribed in (at least) one golden triangle. The dashed triangle is
the golden triangle. }}
\label{fig:2}
\end{figure}

\begin{example}
\label{ex:1} Let $\Delta (z_{1},z_{2},z_{3})$ be a golden triangle on the
unit circle. From Theorem $2.1$ in \cite{Shubak}, we know that the Steiner
ellipse $E$ inscribed in this golden triangle has foci $a_{1}$ and $a_{2}$
with the following equation $:$%
\begin{equation*}
a_{1}=\frac{1}{3}(z_{1}+z_{2}+z_{3})+\sqrt{(\frac{1}{3}%
(z_{1}+z_{2}+z_{3}))^{2}-\frac{1}{3}(z_{1}z_{2}+z_{1}z_{3}+z_{2}z_{3})}
\end{equation*}%
and
\begin{equation*}
a_{2}=\frac{1}{3}(z_{1}+z_{2}+z_{3})-\sqrt{(\frac{1}{3}%
(z_{1}+z_{2}+z_{3}))^{2}-\frac{1}{3}(z_{1}z_{2}+z_{1}z_{3}+z_{2}z_{3})}.
\end{equation*}%
Then this Steiner ellipse $E$ is the Poncelet curve of the Blaschke product $%
B(z)=z\frac{z-a_{1}}{1-\overline{a_{1}}z}\frac{z-a_{2}}{1-\overline{a_{2}}z}$%
.
\end{example}

\begin{figure}[h]
\centering
\includegraphics[height=8cm, width=8cm]{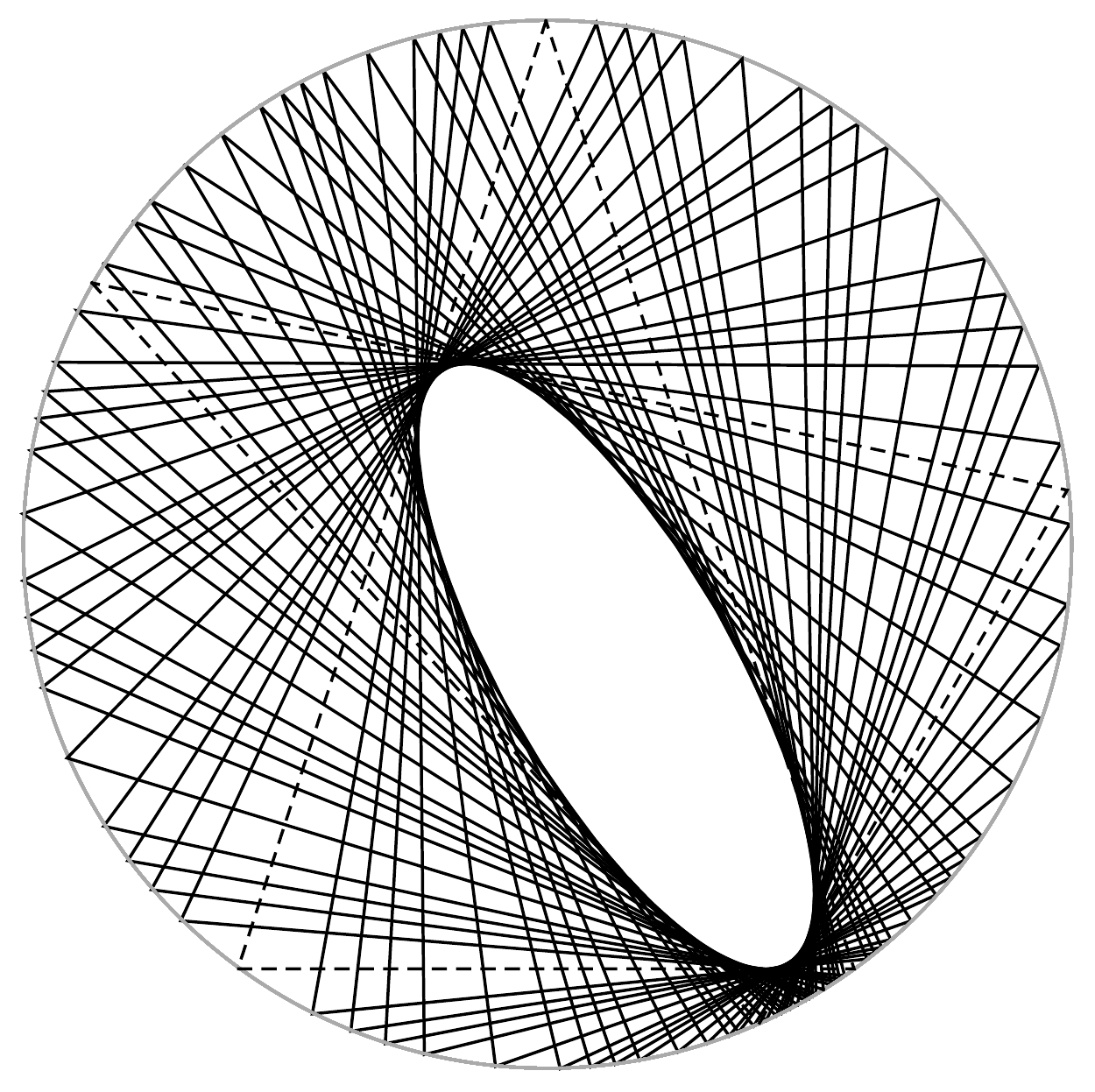}
\caption{{\protect\small Blaschke product $B$ of degree $3$ whose Poncelet
curve inscribed in (at least) two golden triangles. The dashed triangles are
the golden triangles. }}
\label{fig:22}
\end{figure}

\begin{example}
\label{ex:2} Let $z_{1},z_{2},z_{3}$ and $w_{1},w_{2},w_{3}$ be triples of
points which form the golden triangles $\Delta (z_{1},z_{2},z_{3})$ and $%
\Delta (w_{1},w_{2},w_{3})$ on the unit circle so that $\{z_{1},z_{2},z_{3}%
\} $ and $\{w_{1},w_{2},w_{3}\}$ are interspersed sets of the points. From
Corollary $10$ on page $97$ in \cite{Daepp}, we know that there exists a
Blaschke product $B$ of degree $3$ which maps $0$ to $0$ such that $%
B(z_{j})=B(z_{k})$ and $B(w_{j})=B(w_{k})$ for all $j$ and $k$ $(1\leq
j,k\leq 3).$ Since we can choose the triples $z_{1},z_{2},z_{3}$ and $%
w_{1},w_{2},w_{3}$ by infinitely many different ways then clearly there are
infinitely many Blaschke ellipses each of which has at least two golden
triangle circumscribing them and having the vertices on the unit circle.
\end{example}

We have seen examples of Blaschke products of degree three of which Poncelet
curves inscribed in at least one or two golden triangles.

Now we consider the answer of our second question.

\begin{theorem}
\label{thm7} There are infinitely many golden ellipses which are Blaschke
ellipses in the unit disc.
\end{theorem}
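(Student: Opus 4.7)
The plan is to translate the condition ``$E$ is a golden ellipse'' into an algebraic condition on the foci $a_1,a_2\in\mathbb{D}$, exhibit one explicit pair satisfying it, and then rotate to produce infinitely many.

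First, I would read off the geometric data of $E$ from (\ref{eqn8}): its foci are $a_1,a_2$, its semi-major axis has length $\tfrac{1}{2}|1-\overline{a_1}a_2|$, and the distance between the foci is $|a_1-a_2|$. The golden requirement ``semi-major $=\alpha\cdot$semi-minor'' squares to
\begin{equation*}
|1-\overline{a_1}a_2|^{2} \;=\; \frac{\alpha^{2}}{\alpha^{2}-1}\,|a_1-a_2|^{2} \;=\; \alpha\,|a_1-a_2|^{2},
\end{equation*}
where the last equality uses $\alpha^{2}-1=\alpha$ from (\ref{eqn1}).

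Next, I would specialise to $a_1=r$, $a_2=-r$ with $r\in(0,1)$, reducing the condition to $(1+r^{2})^{2}=4\alpha r^{2}$. Writing $s=r^{2}$, this becomes the quadratic $s^{2}-(4\alpha-2)s+1=0$, whose discriminant equals $16(\alpha^{2}-\alpha)=16$ again by (\ref{eqn1}); the admissible root $s=2\alpha-3=\sqrt{5}-2$ lies in $(0,1)$, giving an explicit pair of foci $\pm\sqrt{\sqrt{5}-2}\in\mathbb{D}$. The non-degeneracy inequality $1+r^{2}>2r$ is automatic, so $E$ is a genuine golden ellipse and the Blaschke product with zeros $0,r,-r$ is an explicit witness.

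Finally, the substitution $(a_1,a_2)\mapsto(e^{i\theta}a_1,e^{i\theta}a_2)$ leaves both $|1-\overline{a_1}a_2|$ and $|a_1-a_2|$ invariant, so it preserves the displayed golden condition; as $\theta$ runs over $[0,\pi)$ the major axes of the resulting ellipses sweep distinct directions, yielding infinitely many pairwise distinct golden Blaschke $3$-ellipses. The only delicate step is the algebraic reduction: the ratio-of-axes equation is a priori quartic in the real coordinates of $a_1,a_2$, and the plan works only because the defining identity $\alpha^{2}=\alpha+1$ simultaneously collapses the coefficient $\alpha^{2}/(\alpha^{2}-1)$ to $\alpha$ and the discriminant of the quadratic in $s$ to a perfect square, exposing an explicit admissible $r$ after which the rotation argument furnishes the infinite family for free.
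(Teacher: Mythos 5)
Your proposal is correct and follows essentially the same route as the paper: both reduce to an ellipse with foci $\pm c$ on a diameter, combine the golden condition $\alpha^{2}=\alpha+1$ with the Blaschke-ellipse condition that the major axis equals $|1-\overline{a_1}a_2|=1+c^{2}$ to get a solvable quadratic (your root $c^{2}=2\alpha-3=\sqrt{5}-2$ agrees with the paper's $c=\frac{1}{2}(\sqrt{2(1+\sqrt{5})}-\sqrt{2(-1+\sqrt{5})})$), and then produce the infinite family by rotation. Your version is slightly cleaner in stating the rotation-invariant form $|1-\overline{a_1}a_2|^{2}=\alpha|a_1-a_2|^{2}$ of the golden condition before specializing, but the substance is identical.
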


\begin{proof}
Let us take a golden ellipse with equation $\frac{x^{2}}{a^{2}}+\frac{y^{2}}{%
b^{2}}=1$ in the unit disc. Then by definition $\frac{a}{b}=\alpha $ and so $%
a=b\alpha $. Recall that we have the equation $a^{2}=b^{2}+c^{2}$ where the
point $c$ is the positive focus of the ellipse. So this ellipse has foci $-c$
and $c$. By the last equation and $\alpha ^{2}=\alpha +1$, we find $%
b^{2}\alpha =c^{2}$. Combining $a=b\alpha $ and $b^{2}\alpha =c^{2}$ we find
$a=\pm c\sqrt{\alpha }$. Now we consider the Blaschke product associated
with this ellipse. If this ellipse is a Blaschke ellipse, it must be $%
2a=1+c^{2}$ by the definition of a Blaschke ellipse. Hence we find $c^{2}\pm
2\sqrt{\alpha }c+1=0$. As these equations have only one positive root $c=%
\frac{1}{2}(-\sqrt{2(-1+\sqrt{5})}+\sqrt{2(1+\sqrt{5})})$, there is one
golden ellipse which is a Blaschke ellipse. Since every rotation of this
golden ellipse is again golden, clearly we have infinitely many golden
Blaschke ellipses in the unit disc.
\end{proof}

We give the following definition.

\begin{definition}
Let $B$ be a finite Blaschke product of degree $n$ of the canonical form. If
the Poncelet curve associated with $B$ is an ellipse and this ellipse is a
golden ellipse, then $B$ is called as a golden Blaschke product.
\end{definition}

\begin{example}
Let us consider the Blaschke product%
\begin{equation*}
B_{1}(z)=z\frac{(z-a_{1})(z-a_{2})}{(1-\overline{a}_{1}z)(1-\overline{a}%
_{2}z)},
\end{equation*}%
where
\begin{equation*}
a_{1}=\frac{1}{2}(-\sqrt{2(-1+\sqrt{5})}+\sqrt{2(1+\sqrt{5})})
\end{equation*}%
and $a_{2}=-a_{1}$. By the proof of Theorem \ref{thm7} we know that the
Blaschke $3$-ellipse $E$ associated with $B_{1}$ is a golden ellipse. So $%
B_{1}(z)$ is a golden Blaschke product. The image of this golden Blaschke
ellipse under the rotation transformation $f(z)=(\frac{1}{2}+i\frac{\sqrt{3}%
}{2})z$ is another golden Blaschke ellipse. Clearly we find the equation of $%
f(E)$ as%
\begin{equation*}
\left\vert z-(\frac{1}{2}+i\frac{\sqrt{3}}{2})a_{1}\right\vert +\left\vert
z-(\frac{1}{2}+i\frac{\sqrt{3}}{2})a_{2}\right\vert =\left\vert 1-\overline{%
a_{1}}a_{2}\right\vert .
\end{equation*}%
More precisely, this image ellipse $f(E)$ is the Poncelet curve of the
following Blaschke product $:$%
\begin{equation*}
B_{2}(z)=z\frac{(z-(\frac{1}{2}+i\frac{\sqrt{3}}{2})a_{1})(z-(\frac{1}{2}+i%
\frac{\sqrt{3}}{2})a_{2})}{(1-(\frac{1}{2}-i\frac{\sqrt{3}}{2})\overline{a}%
_{1}z)(1-(\frac{1}{2}-i\frac{\sqrt{3}}{2})\overline{a}_{2}z)}.
\end{equation*}
\end{example}

\section{\textbf{Blaschke Products of Degree Four}}

A golden rectangle is a rectangle such that the ratio of the length $x$ of
the longer side to the length $y$ of the shorter side is the golden ratio $%
\frac{1+\sqrt{5}}{2}$ (see \cite{Koshy} for more details).

We give the following theorem.

\begin{theorem}
There are infinitely many golden rectangles whose four vertices lie on the
unit circle.
\end{theorem}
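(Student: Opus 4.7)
The plan is to exhibit one golden rectangle inscribed in the unit circle by solving the two defining constraints, and then use rotational symmetry to produce an infinite family. First I would fix the convention that the longer side has length $x$ and the shorter side has length $y$, so by definition of a golden rectangle $x/y=\alpha$. Next I would use the classical fact that any rectangle inscribed in a circle must have its diagonals as diameters of the circle; in our setting this forces $x^{2}+y^{2}=4$.

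Combining $x=\alpha y$ with $x^{2}+y^{2}=4$ gives $(\alpha^{2}+1)y^{2}=4$. Using the identity $\alpha^{2}=\alpha+1$, this becomes $(\alpha+2)y^{2}=4$, and solving yields
\begin{equation*}
y=\frac{2}{\sqrt{\alpha+2}},\qquad x=\alpha y=\frac{2\alpha}{\sqrt{\alpha+2}}.
\end{equation*}
Both numbers are positive and real, so the axis-aligned rectangle with vertices $\bigl(\pm x/2,\pm y/2\bigr)$ is an honest golden rectangle whose four vertices all satisfy $(x/2)^{2}+(y/2)^{2}=1$ and thus lie on the unit circle.

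Finally, I would observe that for every angle $\theta\in[0,2\pi)$ the rotation map $z\mapsto e^{i\theta}z$ is an isometry of $\mathbb{C}$ that fixes the unit circle setwise. Applied to the rectangle just constructed, it produces a new rectangle with the same side lengths (hence still golden) and with its four vertices still lying on $\partial\mathbb{D}$. Since distinct values of $\theta$ in $[0,\pi/2)$ yield distinct rectangles (the orientation of the long side changes), this gives uncountably many, and in particular infinitely many, golden rectangles inscribed in the unit circle.

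There is no real obstacle here; the only thing that has to be checked is that the system $x/y=\alpha$ and $x^{2}+y^{2}=4$ admits a positive real solution, which the identity $\alpha^{2}=\alpha+1$ makes immediate. The rotation argument is then purely formal and mirrors the closing step used in the proofs of Theorem \ref{thm2} and Theorem \ref{thm7}.
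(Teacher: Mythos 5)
Your proof is correct and is essentially the same as the paper's: both solve $x=\alpha y$ together with the inscribed-in-the-unit-circle constraint to produce one explicit golden rectangle and then invoke rotational symmetry for infinitude. The only difference is notational (you take $x,y$ to be the full side lengths with diagonal $x^{2}+y^{2}=4$, while the paper uses the vertex coordinates $\pm x\pm iy$ with $x^{2}+y^{2}=1$).
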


\begin{proof}
Without loss of generality, let $x$ and $y$ be chosen so that $x,y>0$ and
such that the rectangle with vertices at the points $x+iy,x-iy,-x-iy,-x+iy$
is inscribed in the unit circle. We try to determine the values of $x$ and $%
y $ such that $x^{2}+y^{2}=1$. So it is sufficient to show that there are
values of $x$ and $y$ on the unit circle such that%
\begin{equation*}
2x=2\alpha y.
\end{equation*}%
We get $x=\alpha y$ and using the facts that $x^{2}+y^{2}=1$ and $\alpha
^{2}=\alpha +1$ we obtain $y^{2}(\alpha ^{2}+1)=1$ and hence%
\begin{equation*}
y=\frac{1}{\sqrt{\alpha +2}}=0.525731\text{ and }x=\frac{\alpha }{\sqrt{%
\alpha +2}}=0.850651\text{.}
\end{equation*}%
So we have one golden rectangle such that its vertices are on the unit
circle. Then there are infinitely many golden triangles with vertices on the
unit circle by rotation.
\end{proof}

\begin{example}
\label{ex:3} Let $z_{1},z_{2},z_{3},z_{4}$ and $w_{1},w_{2},w_{3},w_{4}$ be
eight points which form the golden rectangles $(z_{1},z_{2},z_{3},z_{4})$
and $(w_{1},w_{2},w_{3},w_{4})$ on the unit circle so that $%
\{z_{1},z_{2},z_{3},z_{4}\}$ and $\{w_{1},w_{2},w_{3},w_{4}\}$ are
interspersed sets of the points. From Corollary $10$ on page $97$ in \cite%
{Daepp}, we know that there exists a Blaschke product $B$ of degree $4$
which maps $0$ to $0$ such that $B(z_{j})=B(z_{k})$ and $B(w_{j})=B(w_{k})$
for all $j$ and $k$ $(1\leq j,k\leq 4).$ Then clearly there are infinitely
many Poncelet curves associated with a finite Blaschke product of degree $4$
each of which has at least two golden rectangle circumscribing them and
having the vertices on the unit circle.
\end{example}

\begin{figure}[h]
\centering
\includegraphics[height=8cm, width=8cm]{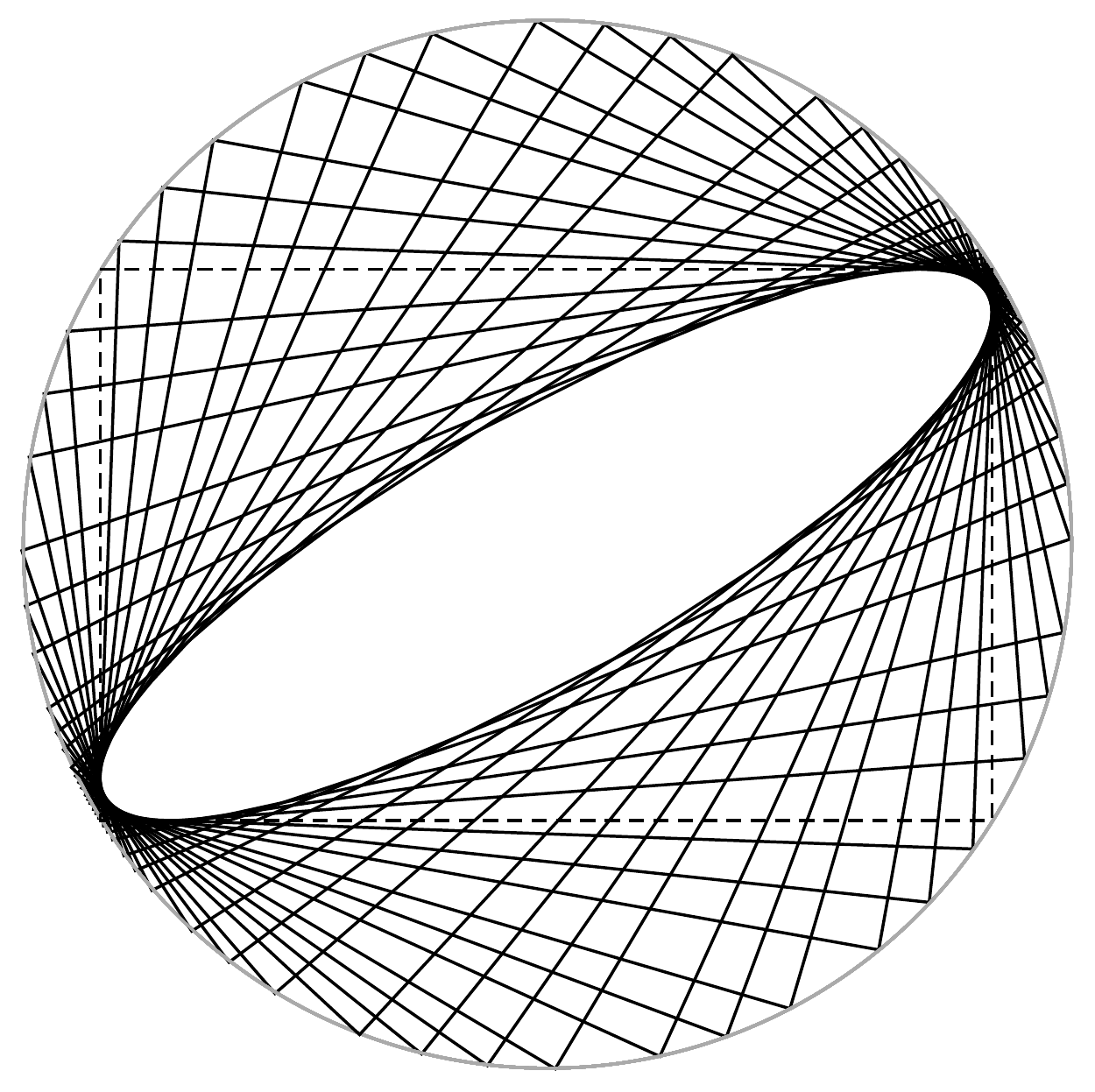}
\caption{{\protect\small {Blaschke product $B$ of degree $4$ whose Poncelet
curve is an ellipse inscribed in (at least) one golden rectangle. The dashed
rectangle is the golden rectangle.}}}
\end{figure}

Using the following lemmas, we construct examples of finite Blaschke
products of degree $4$ whose Poncelet curves are ellipses and each of them
have at least one golden rectangle.

\begin{lemma}
\label{lem1}$($See \cite{fujimura} Lemma $5)$ For any quadrilateral that is
inscribed in the unit circle, an ellipse is inscribed in it if and only if
the ellipse is associated with the composition of two Blaschke products of
degree $2.$
\end{lemma}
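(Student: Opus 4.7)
The plan is to reduce this to two applications of the degree-$2$ Poncelet property from Section \ref{sec:2}, together with Poncelet's closure theorem for a pair of conics.

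For the direction ($\Leftarrow$), suppose $B=B_1\circ B_2$ where $B_1, B_2$ are degree-$2$ Blaschke products. Since each $B_i$ maps $\partial \mathbb{D}$ to itself with degree $2$, for any $\lambda\in \partial \mathbb{D}$ the preimage factors as $B^{-1}(\lambda)=B_2^{-1}(B_1^{-1}(\lambda))$, giving $B_1^{-1}(\lambda)=\{\mu_1,\mu_2\}\subset \partial \mathbb{D}$ and $B_2^{-1}(\mu_i)=\{z_{2i-1},z_{2i}\}\subset \partial \mathbb{D}$. I would first apply the degree-$2$ result (after a M\"obius normalization so that $B_2(0)=0$) to conclude that the chords $[z_1,z_2]$ and $[z_3,z_4]$ are concurrent at the nonzero zero $a$ of $B_2$; these are therefore the diagonals of the inscribed quadrilateral. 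Then I would show by a direct computation, using the pairing of $\mu_1,\mu_2$ induced by $B_1$, that the remaining four sides $[z_1,z_3]$, $[z_3,z_2]$, $[z_2,z_4]$, $[z_4,z_1]$ are tangent to a single ellipse, whose foci can be written explicitly in terms of $a$ and the zeros of $B_1$.

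For the direction ($\Rightarrow$), start with a quadrilateral $z_1z_2z_3z_4$ inscribed in $\partial \mathbb{D}$ whose four sides are tangent to an ellipse $E$. By Poncelet's closure theorem applied to the pair (unit circle, $E$), this configuration extends to a one-parameter family of Poncelet $4$-gons. Let $a$ be the intersection of the two diagonals of the initial quadrilateral, and use Theorem \ref{thm1} to define $B_2(z)=z\frac{z-a}{1-\overline{a}z}$. This degree-$2$ Blaschke product has the two diagonals as fibers over two points $\mu_1,\mu_2\in \partial \mathbb{D}$. I would then take $B_1$ to be the unique (up to a unimodular constant) degree-$2$ Blaschke product with a zero chosen so that $B_1^{-1}(\lambda)=\{\mu_1,\mu_2\}$ for the fixed target $\lambda=B(z_1)$, and finally verify via Poncelet closure that $B=B_1\circ B_2$ produces the entire one-parameter family associated with $E$.

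The main obstacle will be the coherence of the construction in the converse: one must show that the intersection point $a$ of the diagonals is the \emph{same} for every quadrilateral in the Poncelet family, and that the induced pairing on $\partial \mathbb{D}$ is globally realized by a single degree-$2$ Blaschke product $B_1$. I expect that establishing this invariance requires either the projective form of Poncelet's theorem (which furnishes a group structure on the tangent points of $E$), or an adaptation of the Frantz-type argument \cite{Frantz} characterizing which ellipses arise from Blaschke products, upgraded from the $3$-gon to the $4$-gon setting.
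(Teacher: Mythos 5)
You should first note that the paper does not prove this statement at all: it is imported verbatim from \cite{fujimura} (Lemma $5$ there) and used as a black box, so your attempt can only be judged on its own terms. As it stands it is an outline whose two essential steps are precisely the ones left unproved. In the ($\Leftarrow$) direction, splitting the fiber $B^{-1}(\lambda)$ into two $B_2$-fibers and locating the two diagonals through the (normalized) nonzero zero of $B_2$ is fine, but the assertion that the four remaining sides are tangent to one ellipse whose foci do not depend on $\lambda$ \emph{is} the lemma; ``a direct computation'' is the entire content of Fujimura's argument, and its output is exactly what this paper later quotes in the proof of the theorem that follows: the foci $a,b$ packaged via $\alpha =-ab$ and $\beta =\frac{a+b-ab(\overline{a}+\overline{b})}{1-\left\vert ab\right\vert ^{2}}$, and the ellipse $\left\vert z-a\right\vert +\left\vert z-b\right\vert =\left\vert 1-\overline{a}b\right\vert \sqrt{\frac{\left\vert a\right\vert ^{2}+\left\vert b\right\vert ^{2}-2}{\left\vert a\right\vert ^{2}\left\vert b\right\vert ^{2}-1}}$. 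Deferring that computation leaves the sketch without its mathematical core.

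The ($\Rightarrow$) direction contains an actual error rather than a mere gap: there is no ``unique (up to a unimodular constant) degree-$2$ Blaschke product $B_1$ with $B_1^{-1}(\lambda)=\{\mu _1,\mu _2\}$.'' The fibers of a degree-$2$ Blaschke product are the pairs of boundary points whose chords pass through a fixed interior point $c$; requiring $\{\mu _1,\mu _2\}$ to be a fiber only forces $c$ to lie on the open chord joining $\mu _1$ and $\mu _2$, and every such $c$ (with a rotation fixing the target value $\lambda$) is admissible. So the admissible $B_1$ form a one-real-parameter family. This non-uniqueness is not a nuisance to be suppressed but the whole point: by Lemma \ref{lem2} the ellipses inscribed in the fixed quadrilateral $Q$ themselves form a one-parameter family, so any construction that attaches a \emph{single} composition $B_1\circ B_2$ to $Q$ can realize at most one of them, and the ``if and only if'' for an arbitrary inscribed ellipse $E$ cannot follow. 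What is needed is to let the center $c$ of $B_1$ run over the chord and to show that the resulting Poncelet curves exhaust the family of Lemma \ref{lem2} and in particular hit the given $E$ --- which again requires the explicit ellipse formula you deferred in the first half. The concurrence of the diagonals across the Poncelet family, which you flag as the main obstacle, is a real issue but is secondary to this counting mismatch.
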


\begin{lemma}
\label{lem2}$($See \cite{fujimura} Lemma $6)$ For four mutually distinct
points $z_{1},...,z_{4}$ on the unit circle $(0\leq \arg z_{1}<\arg
z_{2}<\arg z_{3}<\arg z_{4}<2\pi ),$ there exists an ellipse that is
inscribed in the quadrilateral with vertices $z_{1},...,z_{4}.$ Moreover,
for each quadrilateral, inscribed ellipses form a real-valued one-parameter
family.
\end{lemma}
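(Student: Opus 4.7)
The plan is to combine a projective dimension count with an explicit parameterization. First, I would observe that since $z_{1}, z_{2}, z_{3}, z_{4}$ lie on the unit circle in strictly increasing angular order, the quadrilateral $Q$ with these vertices is simple and convex; its four sides lie on distinct lines $L_{1}, L_{2}, L_{3}, L_{4}$.

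Next, I would count dimensions in the projective space of conics. The space of plane conics is naturally $\mathbb{P}^{5}$, and for any fixed line $L$ the tangency condition ``$C$ is tangent to $L$'' is a quadric hypersurface in $\mathbb{P}^{5}$ (quadratic in the coefficients of $C$). Hence conics tangent simultaneously to $L_{1}, \ldots, L_{4}$ cut out, generically, a pencil---a real one-parameter family. For lines in the ``convex quadrilateral'' configuration one can check (e.g., by producing three degenerate members: the three pairs of opposite sides/diagonals) that the intersection is indeed one-dimensional, not larger.

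To upgrade this family from projective conics to genuine inscribed ellipses, I would invoke the classical Newton--Gauss line: the centers of all conics inscribed in a convex quadrilateral lie on the open segment $M_{1}M_{2}$ joining the midpoints of the two diagonals of $Q$, and each interior point $P$ of this segment is the center of a unique inscribed conic. Since any inscribed conic is trapped in the bounded convex region $Q$, it cannot be a hyperbola or a parabola; away from the degenerate endpoints $M_{1}, M_{2}$ (where the conic collapses to the pair of diagonals), every member of the pencil must therefore be a real ellipse. Parameterizing by $P \in (M_{1}, M_{2})$ then yields both existence and the claimed real one-parameter family.

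The main obstacle is the Newton--Gauss parameterization itself---establishing that $P \mapsto (\text{inscribed conic centered at } P)$ is well-defined and bijective onto the pencil. A cleaner, self-contained route tailored to this paper is to appeal directly to Lemma \ref{lem1}: every ellipse inscribed in a quadrilateral inscribed in the unit circle arises as the Poncelet curve of a composition $B_{2} \circ B_{1}$ of two degree-$2$ Blaschke products; fixing $B_{1}$ so that the two sides $[z_{1},z_{2}]$ and $[z_{3},z_{4}]$ are the chords picked out by $B_{1}$, and letting the free zero of $B_{2}$ vary along a suitable one-dimensional locus (with the remaining side tangencies imposed by Poncelet's closure), traces out precisely the claimed real one-parameter family of inscribed ellipses.
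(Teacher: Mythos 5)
The first thing to say is that the paper contains no proof of this lemma to compare yours against: the statement is imported verbatim from Lemma 6 of \cite{fujimura} and used as a black box, so your proposal has to stand on its own. Its overall strategy --- dualize, observe that conics tangent to the four side-lines $L_{1},\dots,L_{4}$ form a pencil, then sort out which real members are inscribed ellipses --- is a legitimate classical route to the result. But as written it has concrete gaps, one of which you flag yourself and one of which is fatal precisely in the case this paper needs.

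First, the dimension count only bounds the family: it shows the conics tangent to the four lines form (at most) a pencil, but produces no real inscribed ellipse, and existence is the nontrivial half of the lemma. Second, the claim that away from the degenerate members ``every member of the pencil must be a real ellipse'' is false: being tangent to the four \emph{lines} does not trap a conic inside $Q$, and the real pencil genuinely contains hyperbolas tangent to the extensions of the sides. For the square $\left\vert x\right\vert ,\left\vert y\right\vert \leq 1$ the pencil is $x^{2}+txy+y^{2}=1-\frac{t^{2}}{4}$, which is an inscribed ellipse only for $\left\vert t\right\vert <2$ and a real hyperbola for $\left\vert t\right\vert >2$ (e.g.\ $t=4$ is tangent to $x=1$ at $(1,-2)$, off the side). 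The three degenerate members cut the real pencil into three arcs, and identifying the arc of inscribed ellipses --- and showing it is nonempty --- is exactly the content of the lemma; that is the step you defer to the Newton--Gauss parameterization, which you yourself name as ``the main obstacle'' and do not establish. Worse, that parameterization collapses in the case the paper actually uses Lemma \ref{lem2} for: a golden rectangle is a parallelogram, the midpoints $M_{1},M_{2}$ of its two diagonals coincide, the open segment $(M_{1},M_{2})$ is empty, and all inscribed ellipses share the same center, so the map $P\mapsto (\text{conic centered at }P)$ cannot parameterize the family there. Finally, the fallback via Lemma \ref{lem1} is circular: that lemma characterizes an ellipse that is \emph{already} inscribed in the quadrilateral as the Poncelet curve of a composition of two degree-$2$ Blaschke products; it does not construct one, and the ``suitable one-dimensional locus'' of zeros is left unspecified. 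To close the argument you would need to carry out the real classification of the dual pencil through $L_{1}^{\ast},\dots,L_{4}^{\ast}$ sketched above, or simply cite \cite{fujimura} as the paper does.
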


Now we give the following theorem.

\begin{theorem}
Let $Q$ be any golden rectangle inscribed in the unit circle. Then there is
at least one ellipse $E$ inscribed in $Q$ such that $E$ is a Poncelet curve
of a finite Blaschke product $B$ of degree $4$.
\end{theorem}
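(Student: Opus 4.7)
The plan is to obtain the result by combining Lemma \ref{lem1} and Lemma \ref{lem2} essentially off the shelf, together with the observation that a golden rectangle inscribed in the unit circle is, in particular, a (cyclic) quadrilateral inscribed in the unit circle with four mutually distinct vertices whose arguments can be taken in increasing order.

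First I would label the four vertices of $Q$ as $z_{1},z_{2},z_{3},z_{4}\in \partial \mathbb{D}$ arranged so that $0\leq \arg z_{1}<\arg z_{2}<\arg z_{3}<\arg z_{4}<2\pi $ (after applying a rotation if necessary, which preserves both the golden property and the inscribed condition). This puts $Q$ in exactly the setting of Lemma \ref{lem2}, and so there exists a real-valued one-parameter family of ellipses inscribed in $Q$. In particular, at least one such ellipse $E$ exists.

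Next I would invoke Lemma \ref{lem1}. Since $Q$ is a quadrilateral inscribed in the unit circle and $E$ is an ellipse inscribed in $Q$, the lemma guarantees that $E$ is the Poncelet curve associated with the composition of two Blaschke products of degree $2$. Because composition multiplies degrees, this composition is itself a finite Blaschke product $B$ of degree $4$, and $E$ is its associated Poncelet curve. This is precisely what the theorem asserts.

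There is no real obstacle here once Lemmas \ref{lem1} and \ref{lem2} are in hand; the work has already been done in \cite{fujimura}. The only point worth flagging is the consistency of terminology: "the ellipse associated with a composition of two degree-$2$ Blaschke products" in Lemma \ref{lem1} means exactly that the ellipse is the Poncelet curve of the resulting degree-$4$ Blaschke product, so no additional argument is needed to translate between the two statements. If one wanted a stronger conclusion, the one-parameter family in Lemma \ref{lem2} would immediately give infinitely many such ellipses $E$ (and hence infinitely many such Blaschke products $B$) for each fixed golden rectangle $Q$, but the stated theorem only requires existence of one.
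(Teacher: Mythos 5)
Your proposal is correct and follows essentially the same route as the paper: invoke Lemma \ref{lem2} to obtain an inscribed ellipse $E$ in the golden rectangle $Q$, then Lemma \ref{lem1} to realize $E$ as the Poncelet curve of a composition of two degree-$2$ Blaschke products, i.e.\ of a degree-$4$ Blaschke product. The only difference is that the paper additionally extracts from \cite{fujimura} the explicit equations for the foci, the equation of $E$, and the explicit form of $B$, which your argument omits but does not need for the existence statement.
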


\begin{proof}
Let $Q$ be any golden rectangle with the vertices $z_{1},z_{2},z_{3},z_{4}$
on the unit circle. By Lemma \ref{lem2} there exists an ellipse $E$
inscribed in $Q$. We know that the two foci $a$ and $b$ of an ellipse
inscribed in any rectangle whose vertices are $z_{1},z_{2},z_{3},z_{4}$
satisfy the equations
\begin{equation*}
\begin{array}{c}
\left[ \left( \left( \left( -z_{2}+z_{1}\right) z_{3}-z_{1}z_{2}\right)
z_{4}+z_{1}z_{2}z_{3}\right) \overline{a}+z_{2}z_{4}-z_{1}z_{3}\right] a^{2}
\\
-[z_{1}z_{2}z_{3}z_{4}\left( z_{4}-z_{3}+z_{2}-z_{1}\right) \overline{a}%
^{2}-\left( z_{3}+z_{1}\right) \left( z_{4}+z_{2}\right) \left(
z_{2}z_{4}-z_{1}z_{3}\right) \overline{a} \\
+z_{2}z_{4}\left( z_{4}+z_{2}\right) -z_{1}z_{3}\left( z_{1}+z_{3}\right)
]a+z_{1}z_{2}z_{3}z_{4}\left( z_{2}z_{4}-z_{1}z_{3}\right) \overline{a}^{2}
\\
-[\left( z_{2}^{2}z_{3}+z_{1}z_{2}^{2}\right)
z_{4}^{2}-z_{1}^{2}z_{3}^{2}z_{4}-z_{1}^{2}z_{2}z_{3}^{2}]\overline{a} \\
+\left( z_{2}z_{4}-z_{1}z_{3}\right) \left( z_{2}z_{4}+z_{1}z_{3}\right) =0%
\end{array}%
\end{equation*}%
and
\begin{equation*}
\begin{array}{c}
\left( z_{4}-z_{3}+z_{2}-z_{1}\right) ab-\left( z_{2}z_{4}-z_{1}z_{3}\right)
\left( a+b\right) \\
+[\left( z_{2}-z_{1}\right) z_{3}+z_{1}z_{2}]z_{4}-z_{1}z_{2}z_{3}=0%
\end{array}%
\end{equation*}%
given in \cite{fujimura}. Then by the proof of Lemma \ref{lem1} $E$ has the
following equation
\begin{equation*}
E:\left\vert z-a\right\vert +\left\vert z-b\right\vert =\left\vert 1-%
\overline{a}b\right\vert \sqrt{\frac{\left\vert a\right\vert ^{2}+\left\vert
b\right\vert ^{2}-2}{\left\vert a\right\vert ^{2}\left\vert b\right\vert
^{2}-1}}
\end{equation*}%
and $E$ is the Poncelet curve of the finite Blaschke product $B$ of the
following form$:$%
\begin{equation*}
B(z)=z\frac{z-\beta }{1-\overline{\beta }z}\frac{z^{2}+(\overline{\beta }%
\alpha -\beta )z-\alpha }{1-(-\overline{\alpha }\beta +\overline{\beta })z-%
\overline{\alpha }z^{2}},
\end{equation*}%
where $\alpha =-ab$ and $\beta =\frac{a+b-ab(\overline{a}+\overline{b})}{%
1-\left\vert ab\right\vert ^{2}}.$
\end{proof}

\begin{figure}[h]
\centering
\includegraphics[height=8cm, width=8cm]{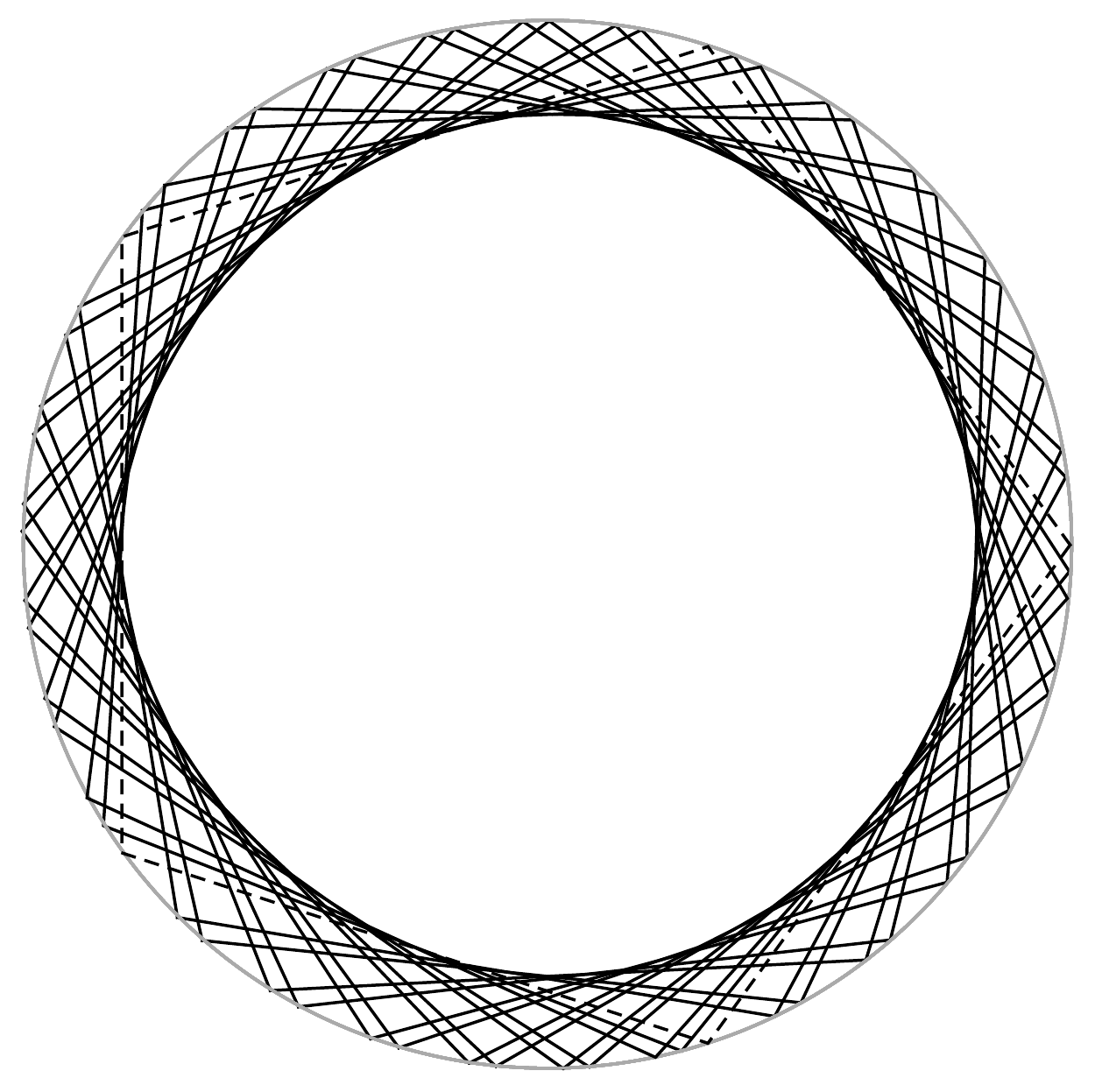}
\caption{{\protect\small Blaschke product $B$ of degree $5$ whose Poncelet
curve inscribed in (at least) one golden pentagon. The dashed pentagon is
the golden pentagon. }}
\label{fig:5}
\end{figure}

\section{Blaschke Products of Higher Degree}

\begin{figure}[h]
\centering
\includegraphics[height=8cm, width=8cm]{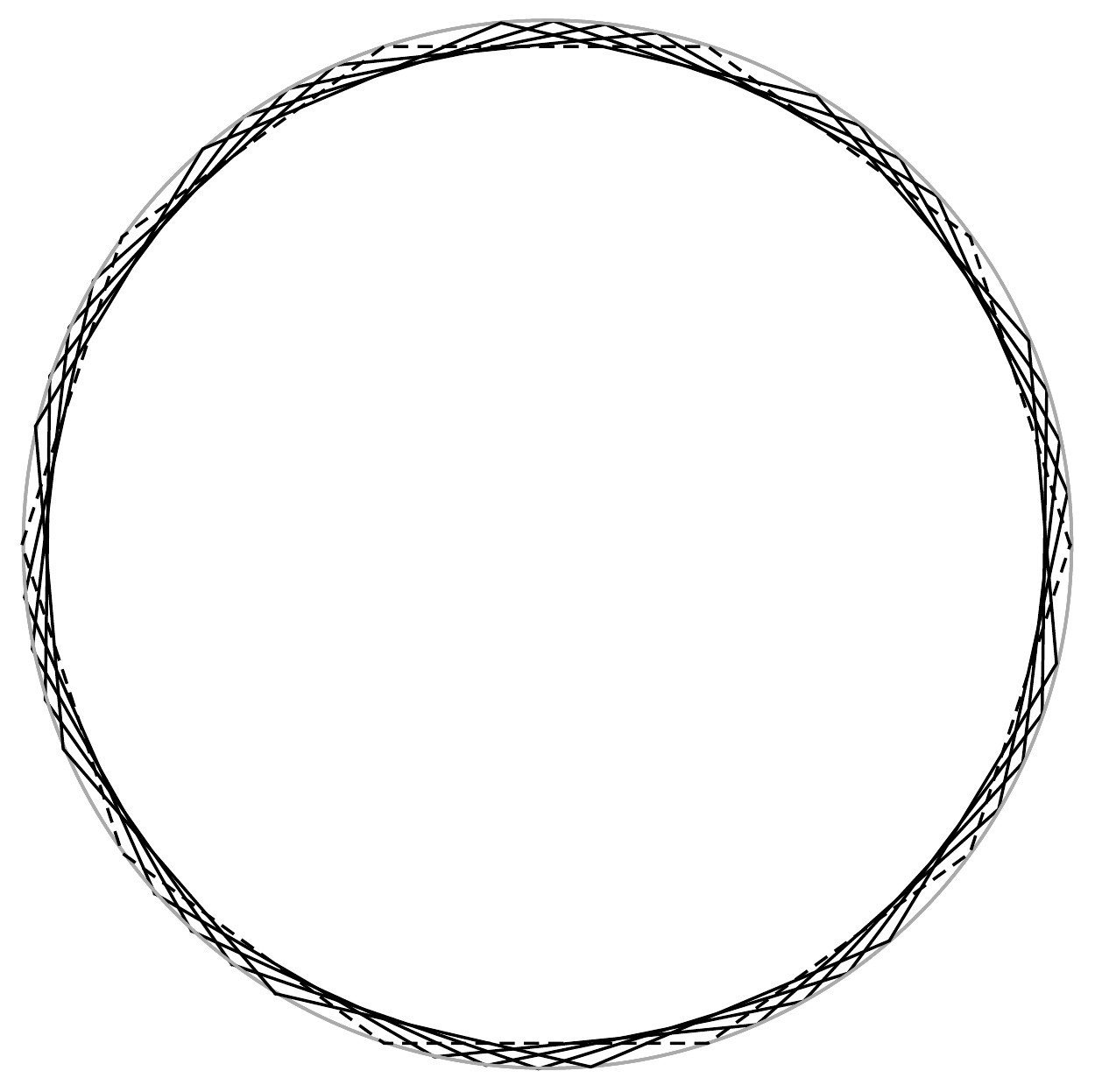}
\caption{{\protect\small Blaschke product $B$ of degree $10$ whose Poncelet
curve inscribed in (at least) one golden decagon. The dashed decagon is the
golden decagon. }}
\label{fig:6}
\end{figure}

We know that regular pentagon and regular decagon have the same properties
of the golden ratio among polygons (see \cite{Koshy} for more details). It
is not known the equation of the Poncelet curves of Blaschke products of
degree $5$ or $10$, so we cannot obtain similar theorems to the ones given
in the previous sections. In these two cases, by the similar arguments used
in the Example \ref{ex:2} and Example \ref{ex:3}, we can obtain finite
Blaschke products of degree $5$ and $10$ whose Poncelet curves circumscribed
by at least two regular pentagon and regular decagon, respectively.

\textbf{Acknowledgment.} The authors would like to thank Professor Pamela
Gorkin and Nathan Wagner for the discussion on the proof of Theorem \ref%
{thm2}.

\end{document}